\documentclass[a4paper,11pt]{article}

\usepackage[T1]{fontenc}
\usepackage{lmodern}
\usepackage{amsmath,amssymb,amsthm,mathtools}

\usepackage{enumitem,needspace}
\usepackage[colorlinks=true,linkcolor=black,citecolor=black,urlcolor=black]{hyperref}
\hypersetup{
  pdftitle={Core Coordinates and Negative Support in A-Hypergeometric Systems},
  pdfauthor={Mao Nagamine}
}

\newcommand{\CC}{\mathbb C}
\newcommand{\QQ}{\mathbb Q}
\newcommand{\RR}{\mathbb R}
\newcommand{\ZZ}{\mathbb Z}
\newcommand{\NN}{\mathbb N}
\newcommand{\nsupp}{\operatorname{nsupp}}
\newcommand{\supp}{\operatorname{supp}}
\newcommand{\cone}{\operatorname{Cone}}
\newcommand{\inte}{\operatorname{int}}
\newcommand{\inw}{\operatorname{in}_{w}}
\newcommand{\NS}{\operatorname{NS}_{w}}

\newtheorem{theorem}{Theorem}[section]
\newtheorem{proposition}[theorem]{Proposition}
\newtheorem{lemma}[theorem]{Lemma}
\newtheorem{corollary}[theorem]{Corollary}
\theoremstyle{definition}
\newtheorem{definition}[theorem]{Definition}

\newtheorem{remark}[theorem]{Remark}
\numberwithin{equation}{section}

\title{Core Coordinates and Negative Support\\
in $A$-Hypergeometric Systems}
\author{Mao Nagamine
\thanks{This work was supported by JST SPRING, Grant Number JPMJSP2119.}}
\date{}

\begin{document}
\maketitle

\begin{abstract}
For a homogeneous $A$-hypergeometric system, let $K_v$ be the
intersection of the negative support family for a
fake exponent $v$, and let $C_w$ be the set of indices of variables that do not occur in any minimal monomial generator of $\operatorname{in}_w(I_A)$.
We prove that $K_v=\operatorname{nsupp}(v)\cap C_w$ holds for every
parameter and every fake exponent if and only if
$\operatorname{Cone}(A_{C_w})$ meets the interior of
$\operatorname{Cone}(A)$.
We give equivalent criteria using the reduced Gr\"obner basis and the
associated regular triangulation.
When the condition fails, we construct an infinite family of fake
exponents for which the equality fails.
\end{abstract}

\begingroup\small
\noindent 2020 Mathematics Subject Classification:
Primary 33C70; Secondary 13P10, 14M25.\par
\noindent Key words and phrases:
$A$-hypergeometric systems, toric ideals,
Gr\"obner bases, regular triangulations.
\par\endgroup

\section{Introduction}

The negative support of a fake exponent plays an important role in the
construction of $A$-hypergeometric series, especially those involving
logarithms.
Saito \cite{Saito2002} studied logarithm-free $A$-hypergeometric series
and gave a dimension formula for their solution space.
In \cite{Saito2020}, he developed a version of Frobenius's method that
perturbs a fake exponent in directions belonging to
$L=\ker_{\ZZ}A$.
Okuyama and Saito \cite{OS2} further developed this method by comparing
spaces of differential operators with constant coefficients and
obtained conditions for constructing a fundamental system of series
solutions with a fixed exponent.
More recently, they described spaces of logarithmic coefficients by
colon ideals and compared perturbations along vectors in $\ker_{\CC}A$ with
perturbations in the full ambient space \cite{OS3}.
The present paper concerns the intersection of the negative supports
used in these constructions.

Fix a generic weight $w$ and write the reduced Gr\"obner basis of the
toric ideal $I_A$ as
\[
\partial^{g^{(k)}_+}-\partial^{g^{(k)}_-},
\]
with the first monomial as the initial term.
These difference vectors generate a subsemigroup $C(w)$ of $L$.
For a fake exponent $v$, Saito \cite[Equation~(9)]{Saito2020}
introduced the family of negative supports for which every lattice
shift realizing the same negative support belongs to $C(w)$.
We denote this family by $\NS(v)$ and set
\[
K_v=\bigcap_{I\in\NS(v)} I.
\]
Let $C_w$ be the set of indices of variables that occur in none of the
minimal monomial generators of
\[
M=\operatorname{in}_w(I_A).
\]
Equivalently, $C_w$ is the intersection of the free index sets of all
standard pairs of $M$, including embedded standard pairs.
\cite[Theorem~3.1]{NagamineCore} proved that
\[
\operatorname{nsupp}(v)\subseteq C_w
\quad\Longrightarrow\quad
K_v=\operatorname{nsupp}(v).
\]
The same argument gives, for an arbitrary fake exponent,
\[
\operatorname{nsupp}(v)\cap C_w
\subseteq K_v
\subseteq\operatorname{nsupp}(v).
\]
This raises the question of when the first inclusion is an equality.

Our main result gives a necessary and sufficient condition for this
equality to hold for every parameter and every fake exponent.
Writing $a_1,\ldots,a_n$ for the columns of $A$, we prove that
\[
K_v=\operatorname{nsupp}(v)\cap C_w
\]
holds for all such $v$ if and only if
\[
\operatorname{Cone}(A_{C_w})
\cap\operatorname{int}\operatorname{Cone}(A)
\ne\emptyset.
\]
This condition is also equivalent to the absence of a proper face of
$\operatorname{Cone}(A)$ containing all core columns, and to the
existence of an $h\in C(w)$ such that
\[
h_i>0\quad(i\notin C_w).
\]
Here a proper face means a face different from the whole cone.

For sufficiency, the existence of such a vector $h$ allows us to eliminate the negative support outside the core by lattice shifts.
For necessity, suppose that all core columns lie in a proper face of
$\operatorname{Cone}(A)$.
We use a boundary simplex of codimension one in the associated regular
triangulation to construct fake exponents satisfying
\[
K_v=\operatorname{nsupp}(v)=\{j\},
\quad j\notin C_w.
\]
The construction gives an infinite family of counter-examples whenever
the geometric condition fails.

The condition can also be checked directly from the initial ideal and
its regular triangulation $\Delta$.
If a boundary simplex $\rho$ of dimension $d-2$ belongs to a unique
maximal face
\[
\sigma=\rho\cup\{j\},
\]
then the condition is equivalent to requiring $j\in C_w$ for every
such $\rho$.
Thus the criterion can be verified by a finite computation from
$\operatorname{in}_w(I_A)$ and $\Delta$.

Section~2 recalls negative support, standard pairs, the core index set
$C_w$, and regular triangulations.
Section~3 proves the main theorem and gives a criterion in terms of
the initial ideal.
Section~4 presents a square configuration satisfying the condition and
a hexagonal configuration for which it fails.

\section{Negative support and standard monomials}\label{sec:prelim}

\subsection{Notation and the negative support family}

Let
$$
A=(a_1,\ldots,a_n)\in\ZZ^{d\times n}
$$
be of rank $d$.  We assume throughout that the columns of $A$ lie in an
affine hyperplane of $\mathbb{Q}^d$ not containing the origin.
Set $\NN=\{0,1,2,\ldots\}$ and
\[
L=\ker_{\ZZ}A,\quad
I_A=\langle\partial^a-\partial^b\mid
a,b\in\NN^n,\ Aa=Ab\rangle
\subseteq\CC[\partial_1,\ldots,\partial_n].
\]
In the Weyl algebra
$D=\CC\langle x_1,\ldots,x_n,\partial_1,\ldots,\partial_n\rangle$,
the $A$-hypergeometric ideal with parameter $\beta\in\CC^d$ is
\[
H_A(\beta)=DI_A+\sum_{i=1}^d D(E_i-\beta_i),
\quad E_i=\sum_{j=1}^n a_{ij}x_j\partial_j.
\]
A formal series annihilated by all operators in this ideal is called an
$A$-hypergeometric series.
For $u\in\ZZ^n$, write $u=u_+-u_-$, where $u_+,u_-$ are nonnegative
vectors with disjoint supports.
Fix a weight $w\in\RR^n$ for which $M=\inw I_A$ is a monomial ideal,
and write the reduced Gr\"obner basis as
\begin{equation}\label{eq:GB}
f_k=\partial^{g^{(k)}_+}-\partial^{g^{(k)}_-},
\quad 1\le k\le m.
\end{equation}
The first monomial is the initial term, hence $w\cdot g^{(k)}>0$.
Define
\begin{equation}\label{eq:semigroup}
C(w)=\NN g^{(1)}+\cdots+\NN g^{(m)}.
\end{equation}

For $a\in\NN^n$ and $z\in\CC^n$, set
\[
[z]_a=\prod_{i=1}^n\prod_{j=0}^{a_i-1}(z_i-j),
\quad \nsupp(z)=\{i\mid z_i\in\ZZ_{<0}\}.
\]
A vector $v\in\CC^n$ is a fake exponent with parameter $\beta$ if and
only if
\begin{equation}\label{eq:fake}
Av=\beta,\quad [v]_{g^{(k)}_+}=0\quad(1\le k\le m).
\end{equation}
Indeed, the reduced Gr\"obner basis of the distraction of $M$ consists
of the falling factorials corresponding to the minimal monomial
generators of $M$ \cite[Theorem~3.2.2]{SST}.
Thus the vanishing conditions for the fake indicial ideal are precisely
\eqref{eq:fake}.
Fix such a $v$ and set $I_0=\nsupp(v)$.
For each $k$, the second condition in \eqref{eq:fake} means that there
is an index $i$ such that
\begin{equation}\label{eq:root}
g_i^{(k)}>0,\quad
v_i\in\{0,\ldots,g_i^{(k)}-1\}.
\end{equation}

\begin{definition}\label{def:NS}
Define the negative support family by
\begin{equation}\label{eq:NS}
\NS(v)=\left\{I\ \middle|\
\begin{array}{l}
I=\nsupp(v+u)\text{ for some }u\in L,\text{ and}\\
\{u'\in L:\nsupp(v+u')=I\}\subseteq C(w)
\end{array}\right\},
\end{equation}
and set $K_v=\bigcap_{I\in\NS(v)}I$.
\end{definition}

This is the family in \cite[Equation~(9)]{Saito2020}.
Since $I_0\in\NS(v)$, the family whose intersection defines $K_v$ is
nonempty; see Proposition~\ref{prop:bounds}.

\subsection{Core and regular triangulations}

Define
\begin{equation}\label{eq:core}
C_w=\{i\mid g_i^{(k)}\le0\text{ for every }k\}.
\end{equation}
Thus $C_w$ is the set of indices of variables absent from every minimal
monomial generator of $M$.

A \emph{standard pair} of a monomial ideal $M$ is a pair $(a,\sigma)$,
where $a\in\NN^n$ and $\sigma\subseteq\{1,\ldots,n\}$, satisfying:
\begin{enumerate}[label=\textup{(\roman*)}]
\item $a_i=0$ for every $i\in\sigma$.
\item For every $b\in\NN^n$ supported on $\sigma$,
      $\partial^{a+b}\notin M$.
\item For each $l\notin\sigma$, there is a $b\in\NN^n$ supported on
      $\sigma\cup\{l\}$ such that $\partial^{a+b}\in M$.
\end{enumerate}

Write $S(M)$ for the set of standard pairs of $M$.
By \cite[Corollary~3.2.3]{SST}, the distraction of $M$ is
\[
\bigcap_{(a,\sigma)\in S(M)}
\langle\theta_j-a_j\mid j\notin\sigma\rangle.
\]
Adding the Euler equations $A\theta=\beta$, we see that $v$ is a fake
exponent if and only if $Av=\beta$ and there is a standard pair
$(a,\sigma)\in S(M)$ such that $v_j=a_j$ for every $j\notin\sigma$.

We have
\begin{equation}\label{eq:standardpairs}
C_w=\bigcap_{(a,\sigma)\in S(M)}\sigma.
\end{equation}
This also follows from the standard pair decomposition of a monomial
ideal \cite[Equation~(3.13)]{SST}.

Set
\begin{equation}\label{eq:Delta}
\Delta=\{\sigma\subseteq\{1,\ldots,n\}\mid
\supp(g^{(k)}_+)\not\subseteq\sigma\text{ for every }k\}.
\end{equation}
Then $\sqrt M$ is the Stanley--Reisner ideal of $\Delta$.
By the correspondence between toric initial ideals and regular
triangulations, $\Delta$ is the regular triangulation of
$\operatorname{conv}(a_1,\ldots,a_n)$ induced by $w$
\cite[Theorem~8.3]{Sturmfels}.
From \cite[Theorem 4.2 (i)]{NagamineHilbert}, the columns indexed by each face
of $\Delta$ are linearly independent.
In particular, every maximal face has $d$ indices.

Moreover,
\begin{equation}\label{eq:core-facets}
C_w\subseteq\sigma
\quad\text{for every maximal face }\sigma\text{ of }\Delta.
\end{equation}
Indeed, if $i\in C_w$, the variable $\partial_i$ occurs in no minimal
generator of $M$.
Hence $\sigma\in\Delta$ implies $\sigma\cup\{i\}\in\Delta$.
If $\sigma$ is maximal, it therefore contains $i$.

Define
\[
\operatorname{core}(\Delta)
:=\bigcap_{\substack{\sigma\in\Delta\\\sigma\text{ is maximal}}}\sigma.
\]
This is Stienstra's triangulation core \cite[Definition~3]{Stienstra},
and \eqref{eq:core-facets} gives
\[
C_w\subseteq\operatorname{core}(\Delta).
\]
Stienstra \cite[Lemma~6]{Stienstra} showed that a simplex not containing
$\operatorname{core}(\Delta)$ lies in the boundary of
$\operatorname{conv}(A)$.
The set $C_w$ used here is determined by all standard pairs, including
embedded ones, and can therefore be smaller than
$\operatorname{core}(\Delta)$.

We recall notations of the triangulation used below.
Let
\[
P=\operatorname{conv}(a_1,\ldots,a_n).
\]
Since $A$ is homogeneous and has rank $d$, the polytope $P$ has
dimension $d-1$.
For an index set $\sigma\subseteq\{1,\ldots,n\}$, write
\[
P_\sigma=\operatorname{conv}(a_i\mid i\in\sigma).
\]
If its vertices are affinely independent, $P_\sigma$ is a
\emph{simplex}.
The convex hull of any subset of its vertices is a face of the simplex.
A \emph{triangulation} of $P$ is a finite collection of simplices such
that:
\begin{enumerate}[label=\textup{(\roman*)}]
\item Their union is $P$.
\item The intersection of two distinct simplices is either empty or a
      common face of both.
\end{enumerate}
A simplex not properly contained in another simplex of the
triangulation is called a \emph{maximal simplex}.
Every maximal simplex in a triangulation of $P$ has dimension $d-1$
and has $d$ vertices.
A \emph{facet} of a polytope is a face of codimension one.
Thus the facets of $P$ have dimension $d-2$.

For a weight $w=(w_1,\ldots,w_n)$, lift each point $a_i$ to
\[
(a_i,w_i)\in\RR^{d+1}.
\]
The projections of the lower faces of the convex hull of these lifted
points give the \emph{regular subdivision} induced by $w$.
For a generic $w$, its maximal cells are simplices, and it is called
the \emph{regular triangulation} induced by $w$.

\subsection{Normal forms and basic properties of negative support}

We collect some basic facts about $C(w)$ and negative supports that will be used in the proof of the main theorem.
\begin{lemma}\label{lem:standard}
If $u\in L$ and $\partial^{u_-}\notin M$, then $u\in C(w)$.
\end{lemma}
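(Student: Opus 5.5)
We argue by Gröbner reduction of the monomial $\partial^{u_+}$ modulo the reduced Gröbner basis \eqref{eq:GB}, inducting along the well-founded order given by $w$. Since $u\in L$, the binomial $\partial^{u_+}-\partial^{u_-}$ lies in $I_A$. Because $\partial^{u_-}\notin M$, the monomial $\partial^{u_-}$ is standard. It is therefore the normal form of $\partial^{u_+}$ with respect to $\{f_k\}$: the normal form is the unique standard monomial congruent to $\partial^{u_+}$ modulo $I_A$, since standard monomials form a basis of the quotient and reducing a monomial by binomials always yields a single monomial.

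Now perform the reduction explicitly. Set $b_0=u_+$. While $\partial^{b_t}\in M$, some minimal generator $\partial^{g^{(k)}_+}$ divides $\partial^{b_t}$. Reducing by $f_k$ replaces $b_t$ by
\[
b_{t+1}=b_t-g^{(k)}_+ + g^{(k)}_-=b_t-g^{(k)},
\]
which is again in $\NN^n$. Each step strictly decreases the $w$-weight, because $w\cdot g^{(k)}>0$. Termination follows from the standard fact that a term order refined by $w$ (or genericity of $w$ together with homogeneity, so that only finitely many monomials lie in each degree) admits no infinite descending chains in a fixed $A$-degree. Indeed, $Ab_t=Au_+$ is fixed, and the fiber $\{b\in\NN^n: Ab=Au_+\}$ is finite by homogeneity.

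The process ends at a standard monomial $\partial^{b_N}$ congruent to $\partial^{u_+}$. By uniqueness of normal forms, $b_N=u_-$. Hence
\[
u=u_+-u_-=b_0-b_N=\sum_{t}(b_t-b_{t+1})
\]
is a sum of vectors $g^{(k)}$ with nonnegative integer multiplicities, so $u\in C(w)$.

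The only delicate point is justifying that the normal form equals $\partial^{u_-}$, i.e. that two distinct standard monomials cannot be congruent modulo $I_A$. This is the usual Macaulay basis theorem for $\CC[\partial]/I_A$ with respect to $\inw I_A$; we would cite it (or \cite{Sturmfels}) rather than reprove it. Finiteness of fibers, used for termination, also covers the case where $w$ is merely a weight rather than a term order.
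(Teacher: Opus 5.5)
Your proof is correct and follows essentially the paper's route. The paper notes that $\partial^{u_+}$ must be the initial term, so $\partial^{u_+}\in M$, then cites Saito's Lemma~4.2 and remarks that, equivalently, reduction by the reduced Gr\"obner basis writes $u$ as an $\NN$-combination of the $g^{(k)}$; you carry out that reduction explicitly, getting termination from strictly decreasing $w$-weight on the finite fiber $\{b\in\NN^n: Ab=Au_+\}$ and $b_N=u_-$ from uniqueness of standard normal forms.
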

\begin{proof}
The case $u=0$ is immediate.
For $u\ne0$, the binomial
$\partial^{u_+}-\partial^{u_-}$ belongs to $I_A$.
Since $\partial^{u_-}\notin M$, its initial term is
$\partial^{u_+}$.
Thus $\partial^{u_+}\in M$, and
\cite[Lemma~4.2]{Saito2020} gives $u\in C(w)$.
Equivalently, reduction by the reduced Gr\"obner basis expresses
$u=\sum_k c_kg^{(k)}$ with $c_k\in\NN$.
\end{proof}

\begin{lemma}\label{lem:subset}
Let $v$ be a fake exponent.
If $u\in L$ satisfies $\nsupp(v+u)\subseteq\nsupp(v)$, then
$u\in C(w)$.
\end{lemma}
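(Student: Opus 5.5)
The plan is to reduce to Lemma~\ref{lem:standard}: it suffices to show that $\partial^{u_-}\notin M$. I will argue by contradiction, using the fake-exponent condition \eqref{eq:root} to produce an index that lies in $\nsupp(v+u)$ but not in $\nsupp(v)$, which contradicts the hypothesis $\nsupp(v+u)\subseteq\nsupp(v)$.

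Suppose $\partial^{u_-}\in M$. Since the $\partial^{g^{(k)}_+}$ are the minimal monomial generators of $M=\inw I_A$, some $k$ satisfies $\partial^{g^{(k)}_+}\mid\partial^{u_-}$, that is, $g^{(k)}_+\le u_-$ componentwise. Because $v$ is a fake exponent, \eqref{eq:root} gives an index $i$ with $g^{(k)}_i>0$ and $v_i\in\{0,\ldots,g^{(k)}_i-1\}$. In particular $v_i\in\NN$, so $i\notin\nsupp(v)$.

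On the other hand, $(u_-)_i\ge g^{(k)}_i>0$, so $u_i=-(u_-)_i\le -g^{(k)}_i$. Then $v_i+u_i\le v_i-g^{(k)}_i\le -1$, and $v_i+u_i$ is an integer because $v_i\in\NN$ and $u\in\ZZ^n$. Hence $i\in\nsupp(v+u)\subseteq\nsupp(v)$, a contradiction. Therefore $\partial^{u_-}\notin M$, and Lemma~\ref{lem:standard} gives $u\in C(w)$. The case $u=0$ is covered by that lemma as well.

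There is no real obstacle here. The only point needing care is that the index supplied by \eqref{eq:root} has an integer coordinate $v_i$, so that $v_i+u_i$ is a negative integer rather than merely a complex number with negative real part.
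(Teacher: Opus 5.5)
Your argument is correct, but it is not the route the paper takes. The paper proves the lemma in one line. It cites \cite[Proposition~4.4]{Saito2020} to get $\nsupp(v+u)\in\NS(v)$, and then reads off $u\in C(w)$ from Definition~\ref{def:NS}, since $u$ is itself one of the shifts realizing that negative support.

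You bypass $\NS(v)$ and the external proposition entirely, using only Lemma~\ref{lem:standard} and the root condition \eqref{eq:root}. This is the same divisibility argument the paper later uses for Lemma~\ref{lem:core-endpoint}, with ``$i\notin C_w$'' replaced by ``$i\notin\nsupp(v)$''.

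Your version gains self-containment and a slightly stronger conclusion. The index $i$ you produce satisfies both $v_i\in\NN$ and $g^{(k)}_i>0$, so it lies outside $\nsupp(v)\cup C_w$. Hence the same three lines show that $\nsupp(v+u)\subseteq\nsupp(v)\cup C_w$ implies $u\in C(w)$, which covers Lemma~\ref{lem:subset} and Lemma~\ref{lem:core-endpoint} at once. Moreover, every shift realizing a negative support $I\subseteq\nsupp(v)$ satisfies your hypothesis. So your lemma immediately gives $I\in\NS(v)$, in particular $I_0\in\NS(v)$, and thus recovers the fact the paper imports from Saito for Proposition~\ref{prop:bounds}.

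The paper's version gains brevity and keeps the statement in the $\NS(v)$-membership form of Saito's framework. The cost is that it depends on the exact content of the cited proposition.
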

\begin{proof}
By \cite[Proposition~4.4]{Saito2020}, $\nsupp(v+u)\in\NS(v)$.
Every shift realizing this negative support belongs to $C(w)$, thus in
particular $u\in C(w)$.
\end{proof}

\begin{proposition}\label{prop:bounds}
For every fake exponent $v$,
\begin{equation}\label{eq:bounds}
I_0\in\NS(v),\quad I_0\cap C_w\subseteq K_v\subseteq I_0.
\end{equation}
\end{proposition}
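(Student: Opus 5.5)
We prove the three assertions of Proposition~\ref{prop:bounds} in the order stated; each one reduces quickly to Lemma~\ref{lem:subset} and to the defining property \eqref{eq:root} of a fake exponent.

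\emph{Step 1: $I_0\in\NS(v)$.} The choice $u=0$ gives $I_0=\nsupp(v+0)$. If $u'\in L$ satisfies $\nsupp(v+u')=I_0$, then in particular $\nsupp(v+u')\subseteq\nsupp(v)$, and Lemma~\ref{lem:subset} gives $u'\in C(w)$. Hence $I_0\in\NS(v)$. Since the family is nonempty and $K_v$ is the intersection over it, we also get $K_v\subseteq I_0$ at once.

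\emph{Step 2: $I_0\cap C_w\subseteq K_v$.} Fix $I\in\NS(v)$, choose $u\in L$ with $\nsupp(v+u)=I$, and let $i\in I_0\cap C_w$. We must show that $i\in I$. Because $I\in\NS(v)$, the shift $u$ lies in $C(w)$. Write $u=\sum_k c_kg^{(k)}$ with $c_k\in\NN$. Since $i\in C_w$, definition \eqref{eq:core} gives $g^{(k)}_i\le 0$ for every $k$, and therefore $u_i\le0$. Then $v_i+u_i\le v_i$ is again a negative integer, because $v_i\in\ZZ_{<0}$ and $u_i\in\ZZ$. Thus $i\in\nsupp(v+u)=I$. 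Intersecting over all $I\in\NS(v)$ yields $I_0\cap C_w\subseteq K_v$.

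The only substantive input is Lemma~\ref{lem:subset}, which rests on Saito's Proposition~4.4; once it is granted, no step presents a real obstacle. The one point to check is that a coordinate of an element of $C(w)$ indexed by $C_w$ is nonpositive. This follows immediately because $C(w)$ is generated by nonnegative integer combinations of the $g^{(k)}$, and every such vector has nonpositive coordinates on $C_w$.
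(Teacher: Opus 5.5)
Your proof is correct and follows the paper's argument: $I_0\in\NS(v)$ comes from Saito's Proposition~4.4, which you reach through Lemma~\ref{lem:subset} (that lemma is just that result restated). The inclusion $I_0\cap C_w\subseteq K_v$ then follows because every element of $C(w)$ is nonpositive on $C_w$, and your opening reference to \eqref{eq:root} is unnecessary since neither step uses it.
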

\begin{proof}
By \cite[Proposition~4.4]{Saito2020}, $I_0\in\NS(v)$, hence
$K_v\subseteq I_0$.
On the other hand, any shift $u$ realizing an $I\in\NS(v)$ belongs to
$C(w)$ and therefore satisfies $u_i\le0$ for $i\in C_w$.
Hence $I_0\cap C_w\subseteq I$ for every $I\in\NS(v)$, which gives
$I_0\cap C_w\subseteq K_v$.
\end{proof}

\begin{lemma}\label{lem:core-endpoint}
Let $v$ be a fake exponent.
If $u\in L$ satisfies $\nsupp(v+u)\subseteq C_w$, then $u\in C(w)$.
Consequently, every realizable negative support contained in $C_w$
belongs to $\NS(v)$.
\end{lemma}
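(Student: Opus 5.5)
The plan is to reduce the first assertion to Lemma~\ref{lem:standard}: it suffices to show that $\partial^{u_-}\notin M$ whenever $u\in L$ and $\nsupp(v+u)\subseteq C_w$. The argument will be by contradiction, combining the root condition \eqref{eq:root} for the fake exponent $v$ with the definition \eqref{eq:core} of $C_w$.

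Suppose $\partial^{u_-}\in M$. Then some minimal monomial generator of $M$ divides $\partial^{u_-}$. Since the reduced Gr\"obner basis \eqref{eq:GB} has the minimal generators as its initial terms, there is a $k$ with $g^{(k)}_+\le u_-$ componentwise. Because $v$ is a fake exponent, \eqref{eq:root} supplies an index $i$ with
\[
g^{(k)}_i>0,\qquad v_i\in\{0,\ldots,g^{(k)}_i-1\}.
\]
From $g^{(k)}_+\le u_-$ we get $(u_-)_i\ge g^{(k)}_i>0$, so $u_i=-(u_-)_i\le -g^{(k)}_i$. Consequently
\[
v_i+u_i\le g^{(k)}_i-1-g^{(k)}_i=-1 .
\]
Here $v_i+u_i$ is an integer, since $v_i\in\NN$ and $u_i\in\ZZ$. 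Hence $i\in\nsupp(v+u)\subseteq C_w$. But $g^{(k)}_i>0$ contradicts $i\in C_w$ by \eqref{eq:core}. Therefore $\partial^{u_-}\notin M$, and Lemma~\ref{lem:standard} gives $u\in C(w)$.

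For the consequence, let $I\subseteq C_w$ be realizable, i.e.\ $I=\nsupp(v+u)$ for some $u\in L$. Every $u'\in L$ with $\nsupp(v+u')=I$ also satisfies $\nsupp(v+u')\subseteq C_w$, so $u'\in C(w)$ by the first part. Both conditions in \eqref{eq:NS} then hold, and $I\in\NS(v)$.

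The only delicate point is the step that produces a negative integer coordinate of $v+u$. It must use the specific root index from \eqref{eq:root} rather than an arbitrary index in $\supp(g^{(k)}_+)$, because only for that index do we know that $v_i$ is a small nonnegative integer.
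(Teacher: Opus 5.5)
Your proposal is correct and follows essentially the same argument as the paper: you assume $\partial^{u_-}\in M$, take $k$ with $g^{(k)}_+\le u_-$, and use the root index from \eqref{eq:root} to get $v_i+u_i\le -1$ with $i\notin C_w$, which is a contradiction, and then you conclude with Lemma~\ref{lem:standard}. The consequence follows, as in the paper, by applying the first part to every shift that realizes the given negative support.
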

\begin{proof}
Suppose that $\partial^{u_-}\in M$.
Then $\partial^{g^{(k)}_+}$ divides $\partial^{u_-}$ for some $k$.
Since $v$ is a fake exponent, there is an index $i$ satisfying
\eqref{eq:root}.
The inequality $g_i^{(k)}>0$ implies $i\notin C_w$.
The divisibility gives $u_i\le-g_i^{(k)}$, and hence
\[
v_i+u_i\le(g_i^{(k)}-1)-g_i^{(k)}=-1.
\]
The left-hand side is an integer, thus $i\in\nsupp(v+u)$, contrary to
the assumption.
Therefore $\partial^{u_-}\notin M$, and Lemma~\ref{lem:standard}
gives $u\in C(w)$.
The same argument applies to every shift realizing the given negative
support, proving the final assertion.
\end{proof}

\section{A geometric characterization}\label{sec:main}

For an index set $T$, write
$\cone(A_T)=\sum_{i\in T}\RR_{\ge0}a_i$, with
$\cone(A_\emptyset)=\{0\}$.
The cone $\cone(A)$ is generated by all columns of $A$, and
$\inte\cone(A)$ denotes its interior.
Since $A$ has rank $d$, the interior is taken in $\RR^d$.

A \emph{face} of $\cone(A)$ is a set of the form
\[
F=\{x\in\cone(A)\mid\lambda(x)=0\},
\]
where $\lambda$ is a linear form nonnegative on the cone.
If $\lambda=0$, then $F=\cone(A)$; if $\lambda\ne0$, then
$F\subsetneq\cone(A)$.

\Needspace{14\baselineskip}
\begin{theorem}\label{thm:main}
Fix $A$ and $w$ as in Section~\ref{sec:prelim}.
The following conditions are equivalent.
\begin{enumerate}[label=\textup{(\roman*)}]
\item For every $\beta\in\CC^d$ and every fake exponent $v$ with
      parameter $\beta$,
      \begin{equation}\label{eq:main}
      K_v=\nsupp(v)\cap C_w.
      \end{equation}
\item The cone generated by the core columns meets the interior of
      $\cone(A)$; that is,
      \[
      \cone(A_{C_w})\cap\inte\cone(A)\ne\emptyset.
      \]
\item There is an $h\in C(w)$ such that $h_i>0$ for every
      $i\notin C_w$.
\item The only face of $\cone(A)$ containing all core columns is
      $\cone(A)$ itself.
\end{enumerate}
The vector $h$ in \textup{(iii)} can be chosen using only $A$ and $w$,
and it satisfies $h_i\le0$ for every $i\in C_w$.
\end{theorem}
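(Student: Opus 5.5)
The plan is to prove the cycle (ii)$\Leftrightarrow$(iv), (ii)$\Rightarrow$(iii)$\Rightarrow$(ii), (iii)$\Rightarrow$(i), and finally (i)$\Rightarrow$(iv) by contraposition. The closing remark, that $h_i\le0$ for $i\in C_w$, needs no separate argument. Every element of $C(w)$ has nonpositive core coordinates by \eqref{eq:core}, since each generator $g^{(k)}$ does.

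\textbf{Geometric equivalences.} For (ii)$\Leftrightarrow$(iv) I would use standard polyhedral geometry. If $\cone(A_{C_w})$ misses $\inte\cone(A)$, it is a convex subset of the boundary. Take a point $y$ in its relative interior. Then $y$ lies in some proper face $F$, and because $F$ is a face, any segment of $\cone(A_{C_w})$ through $y$ stays in $F$. Hence $F$ contains all core columns. Conversely, a proper face containing the core columns contains $\cone(A_{C_w})$ and is disjoint from the interior.

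For (iii)$\Rightarrow$(ii) I would use $Ah=0$ to write
\[
x:=\sum_{i\in C_w}(1-h_i)a_i=\sum_{i\notin C_w}h_ia_i+\sum_{i\in C_w}a_i .
\]
The left-hand side lies in $\cone(A_{C_w})$, because $h_i\le0$ on $C_w$. The right-hand side is a strictly positive combination of all columns, and since $A$ has rank $d$ it lies in $\inte\cone(A)$.

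For (ii)$\Rightarrow$(iii) I would pick a rational $y\in\cone(A_{C_w})\cap\inte\cone(A)$. Write it as $y=\sum_{i\in C_w}c_ia_i$ with $c_i\ge0$, and also as $y=\sum_i t_ia_i$ with all $t_i>0$ rational, which is possible for interior points. Clearing denominators, set $h=t-c$ (with $c$ extended by zero off $C_w$), scaled into $L$. Then $h_i>0$ for $i\notin C_w$, so $h_-$ is supported on $C_w$. A monomial in core variables alone is not in $M$, since every minimal generator involves a non-core variable. Lemma~\ref{lem:standard} then gives $h\in C(w)$. This $h$ depends only on $A$ and $w$.

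\textbf{Sufficiency (iii)$\Rightarrow$(i).} Given a fake exponent $v$, consider $u=Nh$ with $N\gg0$. For $i\notin C_w$, the coordinate $v_i+Nh_i$ is eventually not a negative integer, so $\nsupp(v+Nh)\subseteq C_w$. Lemma~\ref{lem:core-endpoint} then puts $\nsupp(v+Nh)$ into $\NS(v)$. Combined with $I_0\in\NS(v)$ from Proposition~\ref{prop:bounds}, this gives
\[
K_v\subseteq I_0\cap\nsupp(v+Nh)\subseteq I_0\cap C_w .
\]
The reverse inclusion is Proposition~\ref{prop:bounds}.

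\textbf{Necessity (i)$\Rightarrow$(iv), the main obstacle.} Suppose all core columns lie in a proper face, and enlarge it to a facet $\{\lambda=0\}$ with $\lambda\ge0$ on $\cone(A)$. The triangulation $\Delta$ restricts to a triangulation of that facet. Choose a $(d-2)$-simplex $\rho$ of it, and let $\sigma=\rho\cup\{j\}$ be the unique maximal face containing it. Then $\lambda(a_j)>0$, so $j\notin C_w$.

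Since $\sigma$ is maximal, $(0,\sigma)$ should be a top-dimensional standard pair. So the following $v$ is a fake exponent: $v_i=0$ for $i\notin\sigma$, $v_j=-N$, and $v_\rho$ determined by $Av=\beta$. Choosing $\beta$ generically (varying with $N$) makes $v_\rho$ nonintegral and gives an infinite family with $\nsupp(v)=\{j\}$.

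It remains to show $j\in I$ for every $I\in\NS(v)$. Let $u\in L$ satisfy $j\notin\nsupp(v+u)$. Then $u_j\ge N>0$, and since $\lambda(Au)=0$ with $\lambda$ vanishing on $\rho$, some $i\notin\sigma$ with $\lambda(a_i)>0$ must have $u_i<0$. I then need to produce another $u'\in L$ realizing the same negative support but with $u'\notin C(w)$, for example by adding a kernel vector supported on $\sigma\cup\{i\}$ oriented against the Gröbner direction, i.e.\ with negative $w$-weight. This last step is where I expect the real difficulty. I would first try the circuit supported on $\sigma\cup\{i\}$ and use the fact that, because $\rho$ is a boundary simplex, this circuit's binomial has its $w$-initial term on the $\sigma$-side. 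Adding a large multiple of the opposite direction should keep the negative support unchanged (the $\rho$-coordinates are nonintegral) while leaving $C(w)$, as witnessed by $w\cdot u'<0$.
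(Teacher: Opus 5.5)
Your proofs of (ii)$\Leftrightarrow$(iv), (ii)$\Leftrightarrow$(iii) and (iii)$\Rightarrow$(i) are sound and essentially the paper's; you use a relative-interior or rational interior point where the paper uses $b=\sum_{i\in C_w}a_i$, which makes no difference. Your choice of $F,\rho,\sigma,j$ and $v$ in the necessity part is also the paper's.

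The gap is the final step of (i)$\Rightarrow$(iv). You leave it as a heuristic, and the justification you state is backwards. Write the circuit as $t^{(i)}=N_ie_i-\sum_{k\in\sigma}N_i\lambda_{ki}e_k\in L$, where $a_i=\sum_{k\in\sigma}\lambda_{ki}a_k$ and $N_i$ clears denominators. Its negative part is supported on $\sigma\in\Delta$, so $\partial^{t^{(i)}_-}$ is a standard monomial. Hence the initial term of the circuit binomial is $\partial^{t^{(i)}_+}$, the monomial containing $\partial_i$, and not the monomial on the $\sigma$-side. Lemma~\ref{lem:standard} then gives $w\cdot t^{(i)}>0$. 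This uses only $\sigma\in\Delta$, not that $\rho$ is a boundary simplex.

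Read literally, your claim makes $+t^{(i)}$ the ``opposite direction''. Adding large multiples of $+t^{(i)}$ raises coordinate $i$ until $i$ leaves the negative support, and lowers coordinate $j$ until $j$ enters it. Also, your check that the negative support is preserved mentions only the $\rho$-coordinates. The coordinates whose membership can actually change are $i$ and $j$, and you do not check them.

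The repair stays within your own plan: take $u'=u-m\,t^{(i)}$ with $m\gg0$.
\begin{itemize}
\item Coordinate $i$ of $v+u'$ drops by $mN_i$ and stays a negative integer.
\item Coordinate $j$ rises by $mN_i\lambda_{ji}$. Here $\lambda_{ji}=\lambda(a_i)/\lambda(a_j)\ge0$ because $\lambda$ vanishes on the columns indexed by $\rho$ and is nonnegative on $\cone(A)$. This is where the boundary position of $\rho$ is really used.
\item The $\rho$-coordinates move by integers and stay nonintegral; all other coordinates are unchanged.
\end{itemize}
Hence $\nsupp(v+u')=I$. Meanwhile $w\cdot u'=w\cdot u-m\,w\cdot t^{(i)}<0$ for $m$ large, so $u'\notin C(w)$ and $I\notin\NS(v)$.

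With this correction your route is complete and slightly simpler than the paper's. The paper builds $u'=-q\sum_{i\in I}t^{(i)}$ from $I$ alone. It must then show that $S=\sum_{i\in I}N_i\lambda_{ji}$ is a positive integer to get $(v+u')_j=q(S-1)\ge0$. Perturbing the given realizing shift $u$ along a single circuit avoids that integrality bookkeeping.
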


\subsection{Equivalence of the geometric conditions and sufficiency}

\begin{lemma}\label{lem:positive}
Conditions \textup{(ii)}, \textup{(iii)}, and \textup{(iv)} of
Theorem~\ref{thm:main} are equivalent.
\end{lemma}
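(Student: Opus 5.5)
The plan is to prove the three equivalences by two separate arguments: (ii)$\Leftrightarrow$(iv) is pure convex geometry, and (ii)$\Leftrightarrow$(iii) is a translation between linear relations among columns and lattice vectors, with Lemma~\ref{lem:standard} supplying membership in $C(w)$. Write $C=C_w$ throughout.

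\emph{(ii)$\Leftrightarrow$(iv).} Suppose (ii) holds and let $F$ be a face of $\cone(A)$ containing all $a_i$, $i\in C$. Since $F$ is convex, it contains $\cone(A_C)$, and hence it contains an interior point of $\cone(A)$. A proper face $F=\{\lambda=0\}$ with $\lambda\ne0$ lies in the boundary, because near an interior point $\lambda$ takes positive values. So $F=\cone(A)$, which gives (iv).

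Conversely, suppose (ii) fails, so $\cone(A_C)\subseteq\partial\cone(A)$. Take $x$ in the relative interior of $\cone(A_C)$. Since $x$ is a boundary point of the polyhedral cone $\cone(A)$, it has a supporting linear form $\lambda\ne0$ that is nonnegative on $\cone(A)$ with $\lambda(x)=0$. Because $\lambda\ge0$ on $\cone(A_C)$ and vanishes at a relative interior point, it vanishes on all of $\cone(A_C)$. Thus the proper face $\{\lambda=0\}$ contains all core columns, so (iv) fails. The case $C=\emptyset$ fits this argument, since $\cone(A_\emptyset)=\{0\}$ is not interior: the cone is pointed by homogeneity.

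\emph{(iii)$\Rightarrow$(ii).} Every element of $C(w)$ satisfies $h_i\le0$ for $i\in C$, directly from \eqref{eq:core}; this also proves the final sentence of the theorem. Since $Ah=0$, we have
\[
y:=\sum_{i\in C}(-h_i)a_i=\sum_{i\notin C}h_ia_i\in\cone(A_C).
\]
Adding $\sum_{i\in C}a_i$ gives
\[
y':=y+\sum_{i\in C}a_i=\sum_{i\notin C}h_ia_i+\sum_{i\in C}a_i .
\]
The first expression shows $y'\in\cone(A_C)$. The second shows that $y'$ is a combination of all columns with strictly positive coefficients. Since $A$ has rank $d$, such a combination lies in $\inte\cone(A)$, which proves (ii).

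\emph{(ii)$\Rightarrow$(iii).} This is the main step. Because $\cone(A_C)$ is a rational cone and the interior of $\cone(A)$ is open, we can pick $y=\sum_{i\in C}c_ia_i$ with $c_i\in\QQ_{\ge0}$ and $y\in\inte\cone(A)$. For small rational $\varepsilon>0$, the point $y-\varepsilon\sum_{i=1}^n a_i$ still lies in $\cone(A)$. It is a rational point of a rational polyhedral cone, so it can be written as $\sum_i e_ia_i$ with $e_i\in\QQ_{\ge0}$. Setting $u_i=\varepsilon+e_i$ for $i\notin C$ and $u_i=\varepsilon+e_i-c_i$ for $i\in C$ gives $Au=0$, and clearing denominators yields $u\in L$ with $u_i>0$ for all $i\notin C$.

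It remains to show $u\in C(w)$. The negative part $u_-$ is supported on $C$. No variable $\partial_i$ with $i\in C$ occurs in a minimal generator of $M$, and $1\notin M$ because $I_A$ is proper. Hence $\partial^{u_-}\notin M$, and Lemma~\ref{lem:standard} gives $u\in C(w)$. Take $h=u$; it is built only from $A$ and $w$.

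The one point needing care is the rationality used to produce a lattice vector. I expect to settle it by noting that interior points of a rational cone can be chosen rational, and that a rational point of $\cone(A)$ has a nonnegative rational representation, by linear programming over $\QQ$.
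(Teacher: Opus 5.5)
Your proof is correct and follows essentially the same route as the paper: a supporting-hyperplane argument for (ii)$\Leftrightarrow$(iv), and for (ii)$\Rightarrow$(iii) a perturbation of an interior point of $\cone(A_{C_w})$ by $\varepsilon$ times a sum of columns followed by Lemma~\ref{lem:standard}; for (iii)$\Rightarrow$(ii) you use the same rearranged identity $\sum_{i\notin C_w}h_ia_i+\sum_{i\in C_w}a_i=\sum_{i\in C_w}(1-h_i)a_i$. The differences are cosmetic: the paper works throughout with the explicit point $b=\sum_{i\in C_w}a_i$, showing (ii)$\Leftrightarrow b\in\inte\cone(A)\Leftrightarrow$(iv), whereas you use an arbitrary relative-interior point and an arbitrary rational interior point of $\cone(A_{C_w})$.
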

\begin{proof}
We first prove the equivalence of \textup{(ii)} and \textup{(iv)}.
Set
\[
C=\cone(A),\quad C_0=\cone(A_{C_w}),\quad
b=\sum_{i\in C_w}a_i.
\]
We claim that
\[
C_0\cap\inte C\ne\emptyset
\quad\Longleftrightarrow\quad b\in\inte C.
\]
Since $b\in C_0$, the reverse implication is immediate.
For the forward implication, assume that $C_0\cap\inte C\ne\emptyset$
but $b\notin\inte C$.
Then $b$ is on the boundary of $C$, hence there is a nonzero linear form
$\lambda$ nonnegative on $C$ such that $\lambda(b)=0$.
Since
\[
0=\lambda(b)=\sum_{i\in C_w}\lambda(a_i),
\quad\lambda(a_i)\ge0,
\]
we have $\lambda(a_i)=0$ for every $i\in C_w$.
Thus
\[
C_0\subseteq F:=\{x\in C\mid\lambda(x)=0\}.
\]
The face $F$ is proper because $\lambda\ne0$.
Hence $F\cap\inte C=\emptyset$, contradicting
$C_0\cap\inte C\ne\emptyset$.
This proves the claim.

We next show that $b\in\inte C$ is equivalent to \textup{(iv)}.
If $b\notin\inte C$, the preceding argument gives a proper face
containing all core columns, thus \textup{(iv)} fails.
Conversely, if \textup{(iv)} fails, there is a proper face $F$ of $C$
containing all core columns.
Then
\[
b=\sum_{i\in C_w}a_i\in F.
\]
Since $F$ is contained in the boundary of $C$, we have
$b\notin\inte C$.
Therefore
\[
\textup{(ii)}\quad\Longleftrightarrow\quad
b\in\inte\cone(A)\quad\Longleftrightarrow\quad\textup{(iv)}.
\]

We now prove \textup{(ii)} $\Rightarrow$ \textup{(iii)}.
If $C_w=\{1,\ldots,n\}$, take $h=0$.
Otherwise, set $S=\sum_{i\notin C_w}a_i$.
Since $b$ is an interior point, for a sufficiently small rational
number $\varepsilon>0$ we have $b-\varepsilon S\in\cone(A)$.
The columns $a_i$ are integer vectors, and
$b-\varepsilon S\in\QQ^d$.
We may therefore choose $r_i\in\QQ_{\ge0}$ such that
\[
b-\varepsilon S=\sum_{i=1}^n r_i a_i.
\]
Define
\[
\widetilde h_i=
\begin{cases}
r_i+\varepsilon,&i\notin C_w,\\
r_i-1,&i\in C_w.
\end{cases}
\]
Then $A\widetilde h=0$, and $\widetilde h_i>0$ outside the core.
Clearing denominators gives an $h\in L$ with the same sign condition.
Since $\supp(h_-)\subseteq C_w$, we have
$\partial^{h_-}\notin M$.
Lemma~\ref{lem:standard} therefore gives $h\in C(w)$.
The generators of $C(w)$ are nonpositive on the core, hence $h_i\le0$
also holds for $i\in C_w$.

Finally, assume \textup{(iii)}.
The equality $Ah=0$ gives
\[
\sum_{i\notin C_w}h_i a_i+\sum_{i\in C_w}a_i
=\sum_{i\in C_w}(1-h_i)a_i.
\]
The left-hand side is a positive linear combination of all columns,
thus it belongs to $\inte\cone(A)$.
The right-hand side belongs to $\cone(A_{C_w})$.
Thus \textup{(ii)} holds.
\end{proof}

\begin{proof}[Proof of \textup{(iii)} $\Rightarrow$ \textup{(i)} in
Theorem~\ref{thm:main}]
Fix a fake exponent $v$ and set $I_0=\nsupp(v)$.
If $I_0\subseteq C_w$, Proposition~\ref{prop:bounds} gives
\[
K_v=I_0=I_0\cap C_w.
\]
Suppose now that $I_0\setminus C_w\ne\emptyset$.
Choose $h$ as in \textup{(iii)} and set
\[
N=\max_{i\in I_0\setminus C_w}(-v_i)\in\ZZ_{>0}.
\]
For $i\in I_0\setminus C_w$, the integer $h_i$ is positive, hence
$h_i\ge1$ and
\[
v_i+Nh_i\ge v_i+N\ge0.
\]
If $i\notin C_w$ and $v_i\in\ZZ_{\ge0}$, then $v_i+Nh_i\ge0$.
If $v_i\notin\ZZ$, then $v_i+Nh_i\notin\ZZ$ because $Nh_i\in\ZZ$.
Consequently,
\[
I:=\nsupp(v+Nh)\subseteq C_w.
\]
Lemma~\ref{lem:core-endpoint} gives $I\in\NS(v)$.
By Proposition~\ref{prop:bounds},
$I_0\cap C_w\subseteq K_v$.
On the other hand, since $I_0,I\in\NS(v)$,
\[
K_v\subseteq I_0\cap I\subseteq I_0\cap C_w.
\]
Therefore
\[
K_v=I_0\cap C_w=\nsupp(v)\cap C_w.
\]
\end{proof}

\subsection{Boundary counterexamples and necessity}

We prove the remaining implication \textup{(i)} $\Rightarrow$
\textup{(iv)} by contraposition.
Assuming that all core columns lie in a proper face of $\cone(A)$, we
construct a fake exponent that does not satisfy \eqref{eq:main}.

\begin{lemma}\label{lem:boundary-simplex}
Suppose that a proper face of $\cone(A)$ contains all core columns.
Then there exist a facet $F$ of $\cone(A)$, a face $\rho$ of $\Delta$,
a maximal face $\sigma$ of $\Delta$, and an index $j$ such that:
\begin{enumerate}[label=\textup{(\alph*)}]
\item $F$ contains all core columns and all columns indexed by
      $\rho$, and $|\rho|=d-1$.
\item $\sigma$ is the unique maximal face containing $\rho$, and
      \[
      \sigma=\rho\cup\{j\}.
      \]
\item $a_j\notin F$ and $j\notin C_w$.
\end{enumerate}
\end{lemma}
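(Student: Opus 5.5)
We begin by passing from the given proper face to a facet. Every proper face of the polyhedral cone $\cone(A)$ lies in some facet, so we may fix a facet $F=\{x\in\cone(A)\mid\lambda(x)=0\}$, with $\lambda\ge0$ on $\cone(A)$ and $\lambda\ne0$, that contains all core columns. Since the columns lie on an affine hyperplane not through the origin, $F$ corresponds to a facet $P\cap F$ of the polytope $P$, of dimension $d-2$. The regular triangulation $\Delta$ of $P$ restricts to a triangulation of every face of $P$: a simplex of $\Delta$ meeting the relative interior of $P\cap F$ must have all of its vertices on the supporting hyperplane, because $\lambda\ge0$ on $P$. Hence the simplices of $\Delta$ whose columns lie in $F$ cover $P\cap F$. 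Some of these must have dimension $d-2$, i.e.\ $d-1$ vertices, since $P\cap F$ is $(d-2)$-dimensional. We take $\rho$ to be any such face of $\Delta$ with $|\rho|=d-1$ and $a_i\in F$ for all $i\in\rho$. This gives (a).

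For (b), we use the standard pseudomanifold-with-boundary property of triangulations of a full-dimensional polytope. Every $(d-2)$-simplex lies in one or two maximal simplices, and it lies in exactly one precisely when it is contained in the boundary of $P$. To justify this for $\rho$, we take a point $p$ in the relative interior of $P_\rho$. Points near $p$ on the side $\lambda>0$ lie in $P$, and they are covered by maximal simplices containing $\rho$. Every such maximal simplex lies in the half-space $\lambda\ge0$. Two distinct maximal simplices $\rho\cup\{j\}$ and $\rho\cup\{j'\}$ on the same side of the hyperplane spanned by $\rho$ would have overlapping interiors near $p$. That contradicts the intersection axiom of a triangulation, so exactly one maximal face $\sigma=\rho\cup\{j\}$ contains $\rho$. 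We use $|\sigma|=d$ and the linear independence of columns of faces of $\Delta$ from \cite[Theorem 4.2 (i)]{NagamineHilbert}. This is the step that needs the most care, but it is the standard local argument for triangulations.

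For (c), the columns of $\sigma$ are linearly independent of rank $d$, so they cannot all lie in the hyperplane $\lambda=0$. The columns of $\rho$ do lie there, and therefore $\lambda(a_j)>0$, that is, $a_j\notin F$. Finally, all core columns lie in $F$ by the choice of $F$. Since $a_j\notin F$, the index $j$ is not a core index, so $j\notin C_w$. (Alternatively, \eqref{eq:core-facets} gives $C_w\subseteq\sigma$, and $C_w\subseteq\{i:a_i\in F\}$, which together yield $C_w\subseteq\rho$; this confirms the same conclusion.) The main obstacle is the uniqueness in (b), namely making the boundary-facet argument rigorous from the definition of a triangulation given in Section~\ref{sec:prelim}.
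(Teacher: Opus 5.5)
Your proof is correct and follows the paper's route. You pass to a facet $F$ containing the core columns, take a $(d-2)$-simplex $\rho$ of $\Delta$ in $F\cap P$, and use its unique maximal face $\sigma=\rho\cup\{j\}$ (you justify this uniqueness by the local overlap argument, which the paper only asserts). You then get $a_j\notin F$ from linear independence of the columns of $\sigma$, where the paper compares $\dim P_\sigma$ with $\dim(F\cap P)$, and hence $j\notin C_w$.

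Two small repairs are needed. First, the sentence saying that a simplex meeting the relative interior of $P\cap F$ has all its vertices in $F$ is false as stated: $P_\sigma$ itself is a counterexample. It should say that $P_\tau\cap F$ is the face of $P_\tau$ spanned by its vertices in $F$, because $\lambda\ge0$ on those vertices. Second, you tacitly assume $d\ge2$. The paper disposes of $d=1$ by noting that $C_w=\{j\}\neq\emptyset$ there, so the only proper face $\{0\}$ cannot contain all core columns.
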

\begin{proof}
The hypothesis cannot hold when $d=1$.
Indeed, homogeneity implies that all columns of $A$ are the same
nonzero vector.
Since $w$ is generic, there is a unique index $j$ minimizing $w_j$, and
\[
M=\langle\partial_i\mid i\ne j\rangle,
\quad C_w=\{j\}.
\]
The only proper face of the one-dimensional cone $\cone(A)$ is
$\{0\}$, which does not contain $a_j$.

Assume henceforth that $d\ge2$.
Let $G$ be a proper face containing all core columns, and choose a
facet $F$ of $\cone(A)$ containing $G$.
Then $F$ also contains all core columns.
Set $P=\operatorname{conv}(A)$.
By homogeneity, $\dim P=d-1$, and $F\cap P$ is a facet of $P$.
Hence
\[
\dim(F\cap P)=d-2.
\]
Restrict $\Delta$ to $F\cap P$ and choose a maximal simplex, with
index set $\rho$, in this restriction.
Then
\[
\dim P_\rho=d-2,\quad |\rho|=d-1.
\]
As a boundary simplex of codimension one, $\rho$ belongs to exactly
one maximal face $\sigma$ of $\Delta$.
Since $\sigma$ has $d$ vertices, it has the form
\[
\sigma=\rho\cup\{j\}
\]
for some $j\notin\rho$.
All columns indexed by $\rho$ lie in $F$.
If $a_j\in F$, then $P_\sigma\subseteq F\cap P$, contradicting
\[
\dim P_\sigma=d-1>\dim(F\cap P)=d-2.
\]
Thus $a_j\notin F$.
Since all core columns lie in $F$, it follows that $j\notin C_w$.
\end{proof}

\begin{proposition}\label{3.4}
Let $F,\rho,\sigma,j$ be as in Lemma~\ref{lem:boundary-simplex}.
For $q\in\ZZ_{>0}$, define $v\in\CC^n$ by
\begin{equation}\label{eq:bad-v}
v_j=-q,\quad
v_k\in\CC\setminus\ZZ\quad(k\in\rho),\quad
v_i=0\quad(i\notin\sigma).
\end{equation}
Then $v$ is a fake exponent with parameter $\beta=Av$, and
\begin{equation}\label{eq:bad-result}
K_v=\nsupp(v)=\{j\},\quad
\nsupp(v)\cap C_w=\emptyset.
\end{equation}
\end{proposition}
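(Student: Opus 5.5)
We will check three things in order: that $v$ is a fake exponent, that $\nsupp(v)=\{j\}$ is disjoint from $C_w$, and that every member of $\NS(v)$ contains $j$. The last is the only substantial step.

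\emph{Fake exponent.} Take $\beta=Av$, so $Av=\beta$ holds by definition. For each $k$, the face $\sigma$ lies in $\Delta$, so by \eqref{eq:Delta} we have $\supp(g^{(k)}_+)\not\subseteq\sigma$. Hence there is an index $i\notin\sigma$ with $g^{(k)}_i>0$. For this $i$ we have $v_i=0\in\{0,\ldots,g^{(k)}_i-1\}$, so \eqref{eq:root} holds and $[v]_{g^{(k)}_+}=0$. Thus \eqref{eq:fake} is satisfied.

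\emph{Negative support.} The coordinates indexed by $\rho$ are non-integers, and the coordinates outside $\sigma$ are $0$. Therefore $\nsupp(v)=\{j\}$. Lemma~\ref{lem:boundary-simplex}(c) gives $j\notin C_w$, so $\nsupp(v)\cap C_w=\emptyset$. Proposition~\ref{prop:bounds} gives $K_v\subseteq\{j\}$. It remains to show $j\in I$ for every $I\in\NS(v)$.

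\emph{Every $I\in\NS(v)$ contains $j$.} Let $u\in L$ with $I=\nsupp(v+u)$, and suppose $j\notin I$; we will show $I\notin\NS(v)$. Let $\lambda$ be a linear form that is nonnegative on $\cone(A)$ and defines the facet $F$. Then $\lambda(a_k)=0$ for $k\in\rho$ and $\lambda(a_j)>0$.
\begin{itemize}[label=$\circ$]
\item Since $v_j+u_j$ is an integer and $j\notin I$, we get $v_j+u_j\ge0$, so $u_j\ge q>0$.
\item Applying $\lambda$ to $Au=0$ gives $\sum_i\lambda(a_i)u_i=0$. The term $\lambda(a_j)u_j$ is positive, so some index $i\ne j$ has $\lambda(a_i)>0$ and $u_i<0$.
\item This $i$ is not in $\rho$, hence $i\notin\sigma$. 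So $v_i=0$, and therefore $i\in I$.
\end{itemize}
Since $P_\sigma$ is a $(d-1)$-simplex, $a_i$ can be written as
\[
a_i=\sum_{k\in\rho}c_ka_k+c_ja_j,
\qquad c_j=\lambda(a_i)/\lambda(a_j)>0 .
\]
By homogeneity the coefficients sum to $1$. Clearing denominators, we obtain a vector $m\in L$ supported on $\sigma\cup\{i\}$ with $m_i<0$ and $m_j>0$.

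For every $N\in\ZZ_{>0}$ the shift $u+Nm$ has the same negative support $I$:
\begin{itemize}[label=$\circ$]
\item coordinate $i$ only decreases, so it stays in $I$;
\item coordinate $j$ only increases, so it stays nonnegative;
\item coordinates in $\rho$ remain non-integers;
\item all other coordinates are unchanged.
\end{itemize}
Now we use regularity. Because $\sigma$ is a cell of the regular triangulation induced by the generic weight $w$, the lifted point $(a_i,w_i)$ lies strictly above the affine hyperplane through the lifted vertices of $\sigma$. Since the coefficients sum to $1$, this means
\[
w_i>\sum_{k\in\rho}c_kw_k+c_jw_j ,
\]
so $w\cdot m<0$. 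Hence $w\cdot(u+Nm)<0$ for $N$ large. Every nonzero element of $C(w)$ satisfies $w\cdot h>0$, and $u+Nm\ne0$ because it has the negative coordinate $i$. So $u+Nm\notin C(w)$, which shows $I\notin\NS(v)$. Combined with $K_v\subseteq\{j\}$, this gives $K_v=\{j\}$ and proves \eqref{eq:bad-result}.

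The main obstacle is this last step. Ruling out $I$ requires producing one bad shift, not analysing $u$ itself. The key inputs are the circuit $m$ on $\sigma\cup\{i\}$ and the strict lower-hull inequality for $w$, which must be checked carefully, including that $c_j>0$ follows from $\lambda$.
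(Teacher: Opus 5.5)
Your proof is correct. It differs from the paper's in the final step.

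The paper does not modify the given shift $u$. It uses $u$ only to see that $I$ contains an index $i$ with $\eta(a_i)>0$. It then builds a fresh witness $u'=-q\sum_{i\in I}t^{(i)}$, where $t^{(i)}=N_ie_i-\sum_{k\in\sigma}N_i\lambda_{ki}e_k$. The $j$-coordinate is controlled through the integer $S=\sum_{i\in I}N_i\lambda_{ji}\ge 1$, which gives $(v+u')_j=q(S-1)\ge 0$. The sign $w\cdot t^{(i)}>0$ comes from Lemma~\ref{lem:standard}: $t^{(i)}_-$ is supported on $\sigma\in\Delta$, so $\partial^{t^{(i)}_-}\notin M$ and $t^{(i)}\in C(w)$.

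You instead keep $u$ and push it along a single circuit $m$ on $\sigma\cup\{i\}$, which is essentially $-t^{(i)}$. This push preserves the negative support monotonically. You get $w\cdot m<0$ from the strict lower-hull inequality of the regular triangulation.

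Your route avoids summing over all of $I$ and the integrality bookkeeping with $S$. It also isolates a clean principle: a direction in $L$ of negative $w$-weight that preserves the negative support of some realizing shift excludes $I$ from $\NS(v)$. The cost is that it relies on identifying the combinatorially defined $\Delta$ of \eqref{eq:Delta} with the lifted regular triangulation. The paper's route produces an explicit witness depending only on $I$ and stays within the Gr\"obner-basis framework.

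The two justifications of the sign are interchangeable. Since $(-m)_-$ is supported on $\sigma$, Lemma~\ref{lem:standard} gives $-m\in C(w)\setminus\{0\}$ directly, hence $w\cdot m<0$, with no lifting argument. Finally, your check of the fake-exponent property via \eqref{eq:root} and \eqref{eq:Delta} is a slightly more direct substitute for the paper's appeal to the standard pair $(0,\sigma)$.
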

\begin{proof}
Since $\sigma$ is a maximal face of $\Delta$, $(0,\sigma)$ is a
standard pair of $M$.
As $v_i=0$ for $i\notin\sigma$, the vector $v$ is a fake exponent
with parameter $\beta=Av$.
Equation~\eqref{eq:bad-v} and $\sigma=\rho\cup\{j\}$ give
\[
\nsupp(v)=\{j\}.
\]
By Proposition~\ref{prop:bounds}, $K_v\subseteq\{j\}$.
It remains to prove $j\in K_v$.

Choose a nonzero linear form $\eta$ defining $F$ such that
\[
\eta\ge0\quad\text{on }\cone(A),\quad
F=\{x\in\cone(A)\mid\eta(x)=0\}.
\]
Lemma~\ref{lem:boundary-simplex} gives
\[
\eta(a_k)=0\quad(k\in\rho),\quad \eta(a_j)>0.
\]
The $d$ columns indexed by the maximal face $\sigma$ are linearly
independent.
Thus, for each $i\notin\sigma$, there is a unique expression
\[
a_i=\sum_{k\in\sigma}\lambda_{ki}a_k,
\quad\lambda_{ki}\in\QQ.
\]
Applying $\eta$ gives
\[
\lambda_{ji}=\frac{\eta(a_i)}{\eta(a_j)}\ge0.
\]
Choose $N_i\in\ZZ_{>0}$ clearing the denominators and set
\[
t^{(i)}=N_i e_i-\sum_{k\in\sigma}N_i\lambda_{ki}e_k.
\]
Then $t^{(i)}\in L$ and
\[
\supp(t^{(i)}_-)\subseteq\sigma.
\]
Since $\sigma\in\Delta$, we have
$\partial^{t^{(i)}_-}\notin M$.
Lemma~\ref{lem:standard} gives $t^{(i)}\in C(w)$.
The vector $t^{(i)}$ is nonzero, thus
\[
w\cdot t^{(i)}>0.
\]

Now let $I=\nsupp(v+u)$ for some $u\in L$, and suppose that
$j\notin I$.
For $k\in\rho$, the coordinate $v_k$ is not an integer, hence
$I\cap\rho=\emptyset$.
Consequently,
\[
I\subseteq\{1,\ldots,n\}\setminus\sigma.
\]
Since $Au=0$,
\[
\eta\bigl(A(v+u)\bigr)=\eta(Av)=-q\,\eta(a_j)<0.
\]
Because $j\notin I$, we have $v_j+u_j\ge0$.
For $i\notin\sigma\cup I$, we also have $u_i\ge0$.
In the sum
\[
\eta\bigl(A(v+u)\bigr)
=\sum_{\ell=1}^n(v_\ell+u_\ell)\eta(a_\ell),
\]
the terms indexed by $\rho$ vanish, and those indexed by $j$ or by
$\ell\notin\sigma\cup I$ are nonnegative.
If $\eta(a_i)=0$ for every $i\in I$, the sum would be nonnegative,
a contradiction.
Hence $\eta(a_i)>0$ for some $i\in I$, and therefore
\[
S:=\sum_{i\in I}N_i\lambda_{ji}>0.
\]
By the choice of $N_i$, the number $S$ is an integer, thus $S\ge1$.

Set
\[
u'=-q\sum_{i\in I}t^{(i)}.
\]
Outside $\sigma$,
\[
(v+u')_i=
\begin{cases}
-qN_i,&i\in I,\\
0,&i\notin I,
\end{cases}
\]
while
\[
(v+u')_j=q(S-1)\ge0.
\]
For $k\in\rho$, the coordinate $v_k+u'_k$ is not an integer.
It follows that
\[
\nsupp(v+u')=I.
\]
On the other hand,
\[
w\cdot u'=-q\sum_{i\in I}w\cdot t^{(i)}<0,
\]
hence $u'\notin C(w)$.
Thus a negative support $I$ not containing $j$ cannot belong to
$\NS(v)$.
Every member of $\NS(v)$ therefore contains $j$, proving $j\in K_v$.
Hence
\[
K_v=\{j\}=\nsupp(v).
\]
Finally, $j\notin C_w$ gives $\nsupp(v)\cap C_w=\emptyset$.
\end{proof}

By Proposition~\ref{3.4}, if all core columns lie in a proper face of
$\cone(A)$, there is a fake exponent for which \eqref{eq:main} fails.
This proves \textup{(i)} $\Rightarrow$ \textup{(iv)} and completes
the proof of Theorem~\ref{thm:main}.
In particular, under the conditions of the theorem, every fake
exponent $v$ satisfies
\[
K_v=\nsupp(v)
\quad\Longleftrightarrow\quad\nsupp(v)\subseteq C_w.
\]

\subsection{A criterion from the initial ideal}

\begin{corollary}\label{cor:combinatorial}
The conditions of Theorem~\ref{thm:main} are equivalent to the following
condition:
for every face $\rho\in\Delta$ with $|\rho|=d-1$ that is contained in
exactly one maximal face $\sigma$, writing
$\sigma\setminus\rho=\{j\}$, we have
\[
j\in C_w.
\]
\end{corollary}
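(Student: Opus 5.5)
Write (v) for the condition in the corollary. The plan is to prove (v)$\Rightarrow$(iv) and (iv)$\Rightarrow$(v), both by contraposition, using Lemma~\ref{lem:boundary-simplex} for the first and the geometry of boundary simplices for the second.

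\emph{(v)$\Rightarrow$(iv).} Suppose (iv) fails, so some proper face of $\cone(A)$ contains all core columns. Lemma~\ref{lem:boundary-simplex} then gives a face $\rho\in\Delta$ with $|\rho|=d-1$, contained in the unique maximal face $\sigma=\rho\cup\{j\}$, and with $j\notin C_w$. This violates (v), so this direction needs nothing new.

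\emph{(iv)$\Rightarrow$(v).} Suppose $\rho\in\Delta$ with $|\rho|=d-1$ lies in exactly one maximal face $\sigma=\rho\cup\{j\}$, and suppose $j\notin C_w$. We want a proper face of $\cone(A)$ containing all core columns.

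\emph{Step 1.} Show that $P_\rho$ lies in a facet of $P$. In a triangulation of the $(d-1)$-dimensional polytope $P$, a $(d-2)$-simplex is interior exactly when it lies on two maximal simplices. If $P_\rho$ met the relative interior of $P$, points just across its supporting hyperplane from $P_\sigma$ would lie in $P$ and be covered by another maximal simplex containing $\rho$. So $P_\rho$ lies in $\partial P$. Since a $(d-2)$-simplex in $\partial P$ must lie in a single facet, we get a facet $F'$ of $P$ with $P_\rho\subseteq F'$. This is the standard pseudomanifold-with-boundary property, and I would cite it or sketch it.

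\emph{Step 2.} Let $F=\cone(F')$ be the corresponding facet of $\cone(A)$, and let $\eta$ be its defining form. Then $\eta(a_k)=0$ for $k\in\rho$ and $\eta(a_j)>0$, since $\dim P_\sigma=d-1$.

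\emph{Step 3.} Show that every core column lies in $F$. By \eqref{eq:core-facets}, $C_w\subseteq\sigma=\rho\cup\{j\}$, and $j\notin C_w$, so $C_w\subseteq\rho$. Hence every core column lies in $F$, which is a proper face, and (iv) fails.

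I expect the only delicate point to be Step 1, namely that a codimension-one simplex lying in a unique maximal simplex is a boundary simplex contained in a facet. I would argue it directly: if the relative interior of $P_\rho$ met the interior of $P$, then the two sides of its affine span in $\operatorname{aff}P$ near an interior point would both be covered, forcing a second maximal simplex containing $\rho$. If instead $P_\rho\subseteq\partial P$ but no single facet contained it, convexity of $P_\rho$ gives a contradiction, because a convex set of dimension $d-2$ inside $\partial P$ lies in one facet.
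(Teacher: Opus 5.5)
Your proof is correct and follows essentially the same route as the paper. One direction applies Lemma~\ref{lem:boundary-simplex} by contraposition; the other uses \eqref{eq:core-facets} to get $C_w\subseteq\rho$ and places the columns indexed by $\rho$ in a proper face of $\cone(A)$. The only difference is that you justify why a $(d-2)$-simplex lying in a unique maximal simplex lies in a facet of $P$, which the paper simply asserts.
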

\begin{proof}
Assume \textup{(iv)} of Theorem~\ref{thm:main}.
A face $\rho$ as in the statement lies on the boundary of the
triangulation.
Suppose that $j\notin C_w$.
By \eqref{eq:core-facets}, $C_w\subseteq\sigma$.
Since $\sigma=\rho\cup\{j\}$, it follows that
\[
C_w\subseteq\rho.
\]
The columns indexed by the boundary face $\rho$ lie in a proper face
of $\cone(A)$.
All core columns therefore lie in that face, contradicting
\textup{(iv)}.

Conversely, if \textup{(iv)} fails, there is a proper face of
$\cone(A)$ containing all core columns.
Lemma~\ref{lem:boundary-simplex} then gives $\rho$ and $\sigma$ with
$j\notin C_w$, thus the stated condition fails.
\end{proof}

\begin{remark}
If embedded standard pairs are present, $C_w$ need not equal the
intersection of the maximal faces of $\Delta$.
Alternatively, one can test whether the rational system of inequalities
\begin{equation}\label{eq:LP}
c_k\ge0,\quad\sum_k c_kg_i^{(k)}>0\quad(i\notin C_w)
\end{equation}
has a solution.
If a real solution exists, a rational solution exists as well.
Clearing denominators then gives the vector $h$ in
\textup{(iii)} of Theorem~\ref{thm:main}.
\end{remark}

\section{Examples}\label{sec:examples}

\subsection{A direct verification of the main equality}\label{ex:main-square}

Let
\[
A=\begin{pmatrix}
1&1&1&1\\
0&1&1&0\\
0&0&1&1
\end{pmatrix},\quad w=(0,2,0,1).
\]
Set $g=(-1,1,-1,1)$.
Then
\[
L=\ZZ g,\quad
I_A=\langle\partial_2\partial_4-\partial_1\partial_3\rangle,
\quad\inw I_A=\langle\partial_2\partial_4\rangle,
\]
thus $C(w)=\NN g$ and $C_w=\{1,3\}$.
Since $g_2=g_4=1>0$, the vector $h=g$ satisfies
\textup{(iii)} of Theorem~\ref{thm:main}.
Thus, for every fake exponent $v$,
\[
K_v=\nsupp(v)\cap\{1,3\}.
\]
Geometrically, the midpoint of the segment joining the two core
columns is
\[
\frac{a_1+a_3}{2}=\left(1,\frac12,\frac12\right),
\]
which lies in the relative interior of the square
$\operatorname{conv}(a_1,a_2,a_3,a_4)$.
This also verifies \textup{(ii)}.

For example, $v=(-1,0,0,-1)$ is a fake exponent with parameter
$\beta=Av=(-2,0,-1)$.
Every lattice shift has the form $kg$ for $k\in\ZZ$, and
\[
v+kg=(-1-k,k,-k,-1+k),\quad
\nsupp(v+kg)=
\begin{cases}
\{2,4\},&k\le-1,\\
\{1,4\},&k=0,\\
\{1,3\},&k\ge1.
\end{cases}
\]
Since $kg\in C(w)$ if and only if $k\ge0$, the definition gives
\[
\NS(v)=\bigl\{\{1,4\},\{1,3\}\bigr\},\quad
K_v=\{1\}=\nsupp(v)\cap C_w.
\]
Here $\nsupp(v)$ is not contained in the core, but its intersection
with the core is precisely $K_v$.

\subsection{A hexagonal configuration where the condition fails}
\label{ex:hexagon}

Consider the same configuration as in
\cite[Example~3.3]{NagamineCore}:
\[
A=\begin{pmatrix}
1&1&1&1&1&1\\
0&1&1&0&-1&-1\\
-1&-1&0&1&1&0
\end{pmatrix},\quad
w=(2,3,8,1,13,5).
\]
In this case,
\[
\begin{aligned}
\inw I_A=\langle\,
&\partial_2\partial_5,\,
\partial_3\partial_6,\,
\partial_3\partial_5^2,\,
\partial_3^2\partial_5,\,
\partial_1\partial_5^2,\\
&\partial_1\partial_3\partial_5,\,
\partial_1\partial_3^2,\,
\partial_1^2\partial_5,\,
\partial_2^2\partial_6^2,\,
\partial_1^2\partial_3
\,\rangle.
\end{aligned}
\]
Consequently,
\[
C_w=\{4\}.
\]
The maximal faces of $\Delta$ are
\[
\{1,2,4\},\quad\{1,4,6\},\quad
\{2,3,4\},\quad\{4,5,6\}.
\]
Set
\[
\rho=\{3,4\},\quad\sigma=\{2,3,4\}.
\]
The face $\rho$ is contained in the unique maximal face $\sigma$, and
\[
\sigma\setminus\rho=\{2\}.
\]
Since $2\notin C_w$, Corollary~\ref{cor:combinatorial} shows that the
conditions of Theorem~\ref{thm:main} fail.

This failure can be seen explicitly in a family of fake exponents.
Let $F$ be the facet of $\cone(A)$ generated by $a_3$ and $a_4$.
For $q\in\ZZ_{>0}$, set
\[
v^{(q)}=\left(0,-q,\frac12,\frac12,0,0\right).
\]
Applying Proposition~\ref{3.4} to $F,\rho,\sigma$, and $j=2$, we see
that $v^{(q)}$ is a fake exponent with parameter
\[
\beta^{(q)}=Av^{(q)}
=\left(1-q,\frac12-q,q+\frac12\right)^T,
\]
and
\[
K_{v^{(q)}}=\nsupp(v^{(q)})=\{2\}.
\]
However,
\[
\nsupp(v^{(q)})\cap C_w=\{2\}\cap\{4\}=\emptyset.
\]
Therefore
\[
K_{v^{(q)}}\ne\nsupp(v^{(q)})\cap C_w.
\]
Thus the hexagonal configuration gives an example where the conditions
of Theorem~\ref{thm:main} do not hold.

\section*{Use of AI}

In preparing this paper, the author used ChatGPT-5.6 Sol in the
following ways:
\begin{itemize}
\item Some ideas incorporated into the proofs in Section 3 were
suggested by ChatGPT-5.6 Sol.
\item ChatGPT-5.6 Sol was used to improve the exposition and to help
identify grammatical errors and possible mathematical errors.
\end{itemize}
All AI-generated suggestions and text incorporated into the paper were
carefully checked, independently verified, and revised by the author.
The author takes full responsibility for the correctness of all mathematical
statements and proofs and for the final text of this paper.

\subsection*{\normalsize Mao NAGAMINE}
\noindent
Department of Mathematics \\
Graduate School of Science\\
Hokkaido University\\
Sapporo 060-0810, JAPAN\\
e-mail: nagamine.mao.d9@elms.hokudai.ac.jp

\end{document}